%
%
%
%
\documentclass{amsart}

\newtheorem{theorem}{Theorem}[section]
\newtheorem{lemma}[theorem]{Lemma}
\newtheorem{corollary}[theorem]{Corollary}
\theoremstyle{definition}
\newtheorem{definition}[theorem]{Definition}
\newtheorem{example}[theorem]{Example}

\theoremstyle{remark}

\numberwithin{equation}{section}





\begin{document}

\title{ Rigidity results on gradient Schouten solitons
}

\author{Romildo Pina}
\curraddr{Instituto de Matem\'atica e Estat\'istica, Universidade Federal de Goi\'as, Goi\^ania, Brasil, 74001-970}
\email{romildo@ufg.br}

\author{Ilton Menezes}
\email{iltomenezesufg@gmail.com}

\subjclass[2010]{Primary 53A30, 53C21}

\keywords{Conformal metric, gradient Schouten solitons}

\begin{abstract}

	In this paper we consider $\rho$-Einstein solitons  of type  $M= \left(B^n, g^{*}\right) \times (F^m,g_F)$, where $\left(B^n,g^{*}\right)$ is conformal to a pseudo-Euclidean space and invariant under the action of the pseudo-orthogonal group, and  $\left(F^m,g_{F}\right)$ is an Einstein manifold.	 We provide	all the solutions for the gradient Schouten soliton case. Moreover, in the Riemannian case, we prove that if 
		 $M= \left(B^n, g^{*}\right) \times (F^m,g_F)$ is a complete gradient Schouten soliton then $\left(B^{n},g^{*}\right)$ is isometric to $\mathbb{S}^{n-1}\times \mathbb{R}$ and  $F^m$ is a compact Einstein  manifold.

\end{abstract}

\maketitle

\section{Introduction}
In differential geometry, the Ricci flow is an intrinsic geometric flow. It is a process that deforms the metric of a Riemannian manifold in a way formally analogous to the diffusion of heat.

In 1982, R. Hamilton introduced a nonlinear evolution equation for Riemannian metrics with the aim of finding canonical metrics on manifolds (see \cite{SB} or \cite{Ha}). This evolution equation is known as the Ricci flow, and it has since been used widely and with great success, most notably in Perelman's solution of the Poincaré conjecture. Furthermore, several convergence theorems have been established.

Bryant \cite{BRYANT} proved that there exists a 
complete, steady, gradient Ricci soliton that is spherically symmetric for any 
$n\geq 3$, which is known as Bryant's soliton. In the bi-dimensional case, this solution was obtained explicitly and it is known as the Hamilton cigar. 

Recently, Cao-Chen \cite{CAOCHEN} showed that any complete, steady, gradient Ricci soliton that is locally conformally flat up to homothety, is either flat or isometric to Bryant's soliton. Complete, conformally flat, shrinking gradient solitons have been characterized as being quotients of $\mathbb{R}^n$, $S^n$ or $R\times S^{n-1}$ (see \cite{FG}). 

Motivated by the notion of Ricci solitons on a semi-Riemannian manifold $(M^{n},g)$, $n\geq3$, it is natural to consider geometric flows of the following type: 

\begin{equation}\label{Bouguignon}
\frac{\partial}{\partial t}g(t)=-2(Ric-\rho Kg)
\end{equation}
for $\rho\in\mathbb{R}$, $\rho\neq0$, as in \cite{CA}. We call these the  Ricci-Bourguignon flows. We notice that  short time existence for the geometric flows described in \eqref{Bouguignon} is provided
in (\cite{CCD}).  Associated to the flows, we have the following notion of gradient $\rho$-Einstein solitons, which generate self-similar solutions:

\begin{definition} Let $\left(M^{n},g\right), n\geq3$, be a Riemannian manifold and let $\rho\in \mathbb{R},\rho\neq0$. We say that $(M^{n},g)$ is a gradient $\rho-$Einstein soliton if there exists a smooth function $h:M\longrightarrow\mathbb{R}$, such that the metric $g$ satisfies the equation 
	\begin{equation}\label{def. 1}
	Ric_{g}+Hess_{g}h=\rho K_{g}g +\lambda g
	\end{equation}
	for some constant $\lambda\in\mathbb{R}$, where $K_{g}$ is the scalar curvature of the metric $g$.
\end{definition}

A $\rho$-Einstein soliton is said to be shrinking, steady, or expanding  if $\lambda>0$, $\lambda=0$, or $\lambda<0$, respectively. Furthermore, a $\rho$-Einstein soliton is said to be a gradient Einstein soliton, gradient traceless Ricci soliton, and gradient Schouten soliton if $\rho=\frac{1}{2}$, $\rho=\frac{1}{n}$, and $\rho=\frac{1}{2(n-1)}$, respectively.

In \cite{CA}, the authors studied $\rho-$Einstein solitons and obtained important rigidity results, proving that every compact gradient Einstein, Schouten, or traceless Ricci soliton, is trivial. 

In \cite{IL}, the authors considered $\rho$-Einstein solitons that are conformal to a 
pseudo-Euclidean space and invariant under the action of the pseudo-orthogonal group. We proved all the solutions for the gradient Schouten soliton case. Moreover, we proved that if a gradient Schouten soliton is both complete, conformal to a Euclidean metric, and rotationally symmetric, then it is isometric to $\mathbb{S}^{n-1}\times\mathbb{R}$.

The objective of this paper is to generalize the results obtained in \cite{IL}, considering  a $\rho$-Einstein soliton as being product manifolds of type $M= \left(B^n, g^{*}\right), \times \left(F^m,g_F\right)$, where $\left(B^n,g^{*}\right)$ is conformal to a pseudo-Euclidean space and invariant under the action of the pseudo-orthogonal group, $\left(F^m,g_{F}\right)$ is Einstein manifold. In addition, we will consider that the potential function is defined only in $B^n$.

More precisely, let $(\mathbb{R}^{n},g)$ be the standard pseudo-Euclidean space with metric $g$ and coordinates $(x_{1},...,x_{n})$, with $g_{ij}=\delta_{ij}\varepsilon_{i}$, $1\leq i,j \leq n$, where $\delta_{ij}$ is the Kronecker delta, and $\varepsilon_{i}=\pm1$, with at least one $\varepsilon_{i}$ equal to one. Let $r=\sum_{i=1}^{n}\varepsilon_{i}x_{i}^{2}$ be a basic invariant for an $(n-1)-$dimensional
pseudo-orthogonal group. Let us found the necessary and sufficient conditions for the existence of differentiable functions $\psi(r)$ and $h(r)$ such that the metric $\tilde{g}=g^{*}+g_{F}$, where $g^{*}=\frac{g}{\psi^{2}}$ satisfies the equation 
\begin{align}\label{0000}
Ric_{\tilde{g}}+Hess_{\tilde{g}}h=\rho K_{\tilde{g}}\tilde{g} +\tilde{\lambda} \tilde{g}
\end{align}
with $\rho=1/2(n-1)$.

In the proof of the results, we will consider product manifolds of type $\left(\mathbb{R}^n,g^{*}\right)\times F^m$ as being a particular case of the warped product $\left(\mathbb{R}^n,g^{*}\right)\times_{f} F^m$ considering $f \equiv 1$. For more details, see \cite{O'neil}.
 
We initially find a system of differential equations, which functions $h$ and $\psi$ must satisfy, so that the metric $\tilde{g}=g^{*}+g_{F}$ satisfies \eqref{0000} (see Theorem \ref{theorem 1}).

Note that if the solutions are invariant under the action of the pseudo-orthogonal group, the system of partial differential equations given in Theorem \ref{theorem 1} can be transformed into a system of ordinary differential equations (see Corollary \ref{Theorem 2}). 

Making $\rho=\frac{1}{2(n-1)}$, in Lemma \ref{lema2} we found conditions necessary on $\psi(r)$ 
so that the problem allows for a solution. Finally, we obtain the necessary and sufficient conditions on $h(r)$ and $\psi(r)$ for the existence of one gradient Schouten soliton. In this case, all solutions are given explicitly. Moreover, in the Riemannian case, we prove that if 	$\left(\left(B^{n}, g^{*}\right)\times F^m\right)$, $n\geq3$ is a complete gradient Schouten soliton, where $\left(B^n,g^{*}\right)$ is conformal to a Euclidean metric and rotationally symmetric where $g^{*}_{ij}=\delta_{ij}/\psi^{2}$, and $F^m$ is an Einstein Riemannian manifold. Then $\left(B^{n},g^{*}\right)$ is isometric to $\mathbb{S}^{n-1}\times \mathbb{R}$, $F^m$ is a compact manifold.

In what follows, we state our main results. We denote the second order derivative of $\psi$ and $h$ by $\psi_{,x_{i}x_{j}}$ and $h_{,x_{i}x_{j}}$, respectively, with respect to $x_{i}x_{j}$.
\section{Main results}
\begin{theorem} \label{theorem 1}
	Let $(\mathbb{R}^n,g)$ be a pseudo-Euclidean space, $n\geq 3$ with coordinates $x=(x_1,\cdots, x_n)$ and $g_{ij}=\delta_{ij}\varepsilon_i$. Consider the product manifold $M = (\mathbb{R}^{n}, g^{*})\times F^{m}$ with metric tensor  $\widetilde{g} = g^{*} + g_{F}$, where $\displaystyle g^{*} =
	\dfrac{1}{\psi^{2}}g$,  $F$ is an Einstein semi-Riemannian manifold with constant Ricci curvature $\lambda_{F}$, $m\geq 1$, $\psi, h:\mathbb{R}^{n}\rightarrow \mathbb{R}$, are smooth functions. Then $M$ is a gradient $\rho$--Einstein soliton with 
	\begin{equation}\label{0}
	 Ric_{\widetilde{g}} +Hess_{\widetilde{g}}h = \left(\rho K_{\tilde{g}}+\tilde{\lambda}\right) \widetilde{g}, \ \ \ \ \ \rho\in\mathbb{R},
	 \end{equation}
	if, and only if, the functions $\psi$ and $h$ satisty
	
	\begin{equation} \label{dpe1}
	\begin{array}{l}
	(n-2) \psi_{x_ix_j}+\psi h_{,x_ix_j}+\psi_{,x_i}h_{,x_j}+\psi_{,x_j}h_{,x_i}=0,\hspace{0.5cm} \forall  i\neq j,
	\end{array}
	\end{equation}
	\begin{equation} \label{dpe2}
	\begin{array}{l}
	 \varepsilon_i\sum\limits_{k=1}^{n}\varepsilon_k\left[\psi\psi_{,x_kx_k}-(n-1)\left(\psi_{,x_k}\right)^2-\psi\psi_{,x_k}h_{,x_k}-2(n-1)\rho\psi\psi_{,x_kx_k}+(n-1)n\rho \left(\psi_{,x_k}\right)^2\right]\\
	\displaystyle
	+\left[(n-2)\psi\psi,_{x_ix_i}+\psi^2 h,_{x_ix_i}+2\psi\psi_{,x_i} h_{,x_i}\right]=\left(\lambda_F m\rho+\tilde{\lambda}\right)\varepsilon_i, \hspace{0.5cm}
	\forall\hspace{0.5cm} i=1,\ldots,n,
	\end{array}
	\end{equation}
	\begin{equation} \label{dpe3}
	\begin{array}{l}
	 \sum\limits_{k=1}^n\varepsilon_k\left((n-1)n\rho\left(\psi_{,x_k}\right)^2-2(n-1)\rho \psi\psi_{,x_kx_k}\right)=\lambda_{F}\left(m\rho-1\right)+\tilde{\lambda}.
	 \end{array}
	\end{equation}
\end{theorem}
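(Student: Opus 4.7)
The proof is a straight computation in coordinates, exploiting the fact that $(\mathbb{R}^n,g)$ is flat and that $\tilde g = g^{*}+g_F$ is a (trivial) warped product. The plan is to compute each tensor appearing in \eqref{0} block by block and then read off the three stated equations from the $B$-off-diagonal, $B$-diagonal and $F$-diagonal components, respectively.

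First, I would record the standard conformal change formula for $g^{*}=g/\psi^{2}$ on flat $(\mathbb{R}^{n},g)$, which yields
\begin{equation*}
\mathrm{Ric}_{g^{*}}=\frac{n-2}{\psi}\,\mathrm{Hess}_{g}\psi+\frac{1}{\psi}\Bigl(\Delta_{g}\psi-(n-1)\frac{|\nabla\psi|_{g}^{2}}{\psi}\Bigr)g,
\end{equation*}
and the scalar curvature $K_{g^{*}} = (2(n-1)\psi\Delta_{g}\psi - n(n-1)|\nabla\psi|_{g}^{2})/\psi^{?}$ (I would just compute $K_{g^{*}}=g^{*ij}(\mathrm{Ric}_{g^{*}})_{ij}$ directly in the $\varepsilon_i$-coordinates). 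For the Hessian of $h$, the Christoffel symbols of $g^{*}$ in these coordinates are $\Gamma_{ij}^{k}=-\delta_{i}^{k}(\log\psi)_{,x_j}-\delta_{j}^{k}(\log\psi)_{,x_i}+\delta_{ij}\varepsilon_i\varepsilon_k(\log\psi)_{,x_k}$, so $(\mathrm{Hess}_{g^{*}}h)_{ij}=h_{,x_ix_j}+\tfrac{1}{\psi}(\psi_{,x_i}h_{,x_j}+\psi_{,x_j}h_{,x_i})$ when $i\ne j$, with an extra trace-type term when $i=j$. Since $h$ depends only on the $\mathbb{R}^{n}$ factor and the warping factor is constant, the mixed components $(\mathrm{Hess}_{\tilde g}h)_{iA}$ and the fiber components $(\mathrm{Hess}_{\tilde g}h)_{AB}$ vanish, while the base block equals $\mathrm{Hess}_{g^{*}}h$.

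Because $F$ is Einstein with $\mathrm{Ric}_{g_F}=\lambda_F g_F$ and the warping function is $1$, the Ricci tensor of $\tilde g$ splits as $\mathrm{Ric}_{\tilde g}=\mathrm{Ric}_{g^{*}}\oplus \lambda_F g_F$ and the scalar curvature is $K_{\tilde g}=K_{g^{*}}+m\lambda_F$. Substituting everything into \eqref{0} and separating blocks: the off-diagonal $B\!\times\!B$ part gives, after multiplication by $\psi$, exactly \eqref{dpe1}; the $F\!\times\!F$ part (where the LHS is $\lambda_F g_F$ and the RHS is $(\rho K_{\tilde g}+\tilde\lambda)g_F$) yields, after expressing $K_{g^{*}}$ in coordinates and simplifying, equation \eqref{dpe3}; and the diagonal $B\!\times\!B$ part, after using \eqref{dpe3} to absorb the $\rho K_{\tilde g}$ contribution coming from the trace term of $\mathrm{Ric}_{g^{*}}$, gives \eqref{dpe2}.

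I do not expect any conceptual obstacle: the argument is an equivalence obtained by a direct computation and the converse direction is automatic, since each step of the derivation is reversible. The only delicate point is bookkeeping—tracking the signs $\varepsilon_i$ in raising indices when computing traces and scalar curvature, and being careful to isolate the diagonal contribution of $\tfrac{1}{\psi}(\Delta_g\psi-(n-1)|\nabla\psi|_g^2/\psi)g$ so that it combines cleanly with the pure Hessian term to produce the bracketed quantity in \eqref{dpe2}. Once the conformal Ricci and Hessian formulas are written out in these coordinates, the three equations drop out by matching entries.
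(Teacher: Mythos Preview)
Your proposal is correct and follows essentially the same route as the paper: both compute $\mathrm{Ric}_{\tilde g}$, $\mathrm{Hess}_{\tilde g}h$, and $K_{\tilde g}$ block by block using the conformal/warped-product formulas, then match the $B$-off-diagonal, $B$-diagonal, and $F$-diagonal components of \eqref{0} to obtain \eqref{dpe1}, \eqref{dpe2}, and \eqref{dpe3} respectively. The only cosmetic difference is that the paper substitutes the explicit expression for $K_{\tilde g}$ directly when deriving \eqref{dpe2} rather than first isolating \eqref{dpe3} and using it, but this is purely organizational.
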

\begin{proof} Suppose initially that $M=\left(\mathbb{R}^n,g^{*}\right)\times F^{m}$ is a gradient $\rho$--Einstein soliton with potential function $h$, that is, 
 \begin{equation}\label{14}
 Ric_{\widetilde{g}} +Hess_{\widetilde{g}}h = \left(\rho K_{\tilde{g}}+\tilde{\lambda}\right) \widetilde{g}, \ \ \ \ \ \rho\in\mathbb{R}.
\end{equation}

Let $X_{1}, X_{2},\ldots, X_{n}\in{\mathcal{L}}(\mathbb{R}^{n})$ and $Y_{1}, Y_{2}, \ldots, Y_{m}\in{\mathcal{L}}(F)$, where ${\mathcal{L}}(\mathbb{R}^{n})$ and ${\mathcal{L}}(F)$  are respectively the spaces of lifts of a vector field on $\mathbb{R}^{n}$ and $F$ to ($\mathbb{R}^{n}\times F^m$ ), then
\begin{equation} \label{Ricprodsemi}
\begin{cases}
Ric_{\widetilde{g}}(X_i,X_j) = \dfrac{(n-2)\psi_{,x_ix_j}}{\psi}, \ \forall \hspace{0.2cm}i\neq j,\hspace{0.2cm} i,j=1,\ldots,n\\
Ric_{\widetilde{g}}(X_i,X_i)=\dfrac{(n-2)\psi_{,x_ix_i}+\varepsilon_i\sum\limits_{k=1}^{n}\varepsilon_k\psi_{,x_kx_k}}{\psi}-(n-1)\varepsilon_i\sum\limits_{k=1}^n\left(\dfrac{\psi_{,x_k}}{\psi}\right)^2, \ \forall\hspace{0.2cm}\hspace{0.2cm} i=1,\ldots,n\\
Ric_{\widetilde{g}}(X_i,Y_j)=0, \ \forall i=1,\ldots,n, j=1,\ldots,m\\
Ric_{\widetilde{g}}(Y_i,Y_j)=\lambda_{F}\widetilde{g}(Y_i,Y_j), \ \forall\hspace{0.2cm} i,j=1,\ldots,m.
\end{cases}
\end{equation}

As the potential function $h:\mathbb{R}^n\longrightarrow\mathbb{R}$ is defined only on the basis, we have
\begin{equation*}
Hess_{\tilde{g}}(h)\left(X_i,X_j\right)=Hess_{g^{*}}(h)\left(X_i,X_j\right)\,\hspace{0.2cm} \forall\hspace{0.2cm} i,j=1,\ldots,n.
\end{equation*}

Therefore,
\begin{equation}\label{Hessprodsemi}
\begin{cases}
Hess_{\widetilde{g}}h(X_i,X_j) = h_{,x_{i}x_{j}}+\frac{\psi_{,x_{j}}h_{,x_{i}}}{\psi}+\frac{\psi_{,x_{i}}h_{,x_{j}}}{\psi}, \ \forall\hspace{0.2cm}i\neq j,\hspace{0.2cm} i,j=1,\ldots,n\\
Hess_{\widetilde{g}}(X_i,X_i)=h_{,,x_{i}x_{i}}+\frac{2\psi_{,x_{i}}h_{,x_{i}}}{\psi}-\varepsilon_{i}\sum_{k=1}^{n}\varepsilon_{k}\frac{\psi_{,x_{k}}h_{,x_{k}}}{\psi}, \ \hspace{0.2cm}\forall\hspace{0.2cm} \hspace{0.2cm} i=1,\ldots,n.
\end{cases}
\end{equation}

We split the proof into two cases. In the first case, we will consider the vector fields on the base and in the second one, on the fiber.

If $X_{1}, X_{2},\ldots, X_{n}\in{\mathcal{L}}(\mathbb{R}^{n})$, then to $i\neq j$, from equation \eqref{14} together with equations \eqref{Ricprodsemi} and \eqref{Hessprodsemi}, follows that 
\begin{equation*}
\dfrac{(n-2)\psi_{,x_ix_j}}{\psi}+h_{,x_ix_j}+\dfrac{\psi_{,x_i}h_{,x_j}}{\psi}+\dfrac{\psi_{,x_j}h_{,x_i}}{\psi}=0.
\end{equation*}
Therefore,
\begin{equation*}
\begin{array}{l}
(n-2) \psi_{x_ix_j}+\psi h_{,x_ix_j}+\psi_{,x_i}h_{,x_j}+\psi_{,x_j}h_{,x_i}=0.
\end{array}
\end{equation*}

For $i=j$ we will need the following facts. It is well known that (see, e.g., \cite{Paul}) 
\begin{equation}\label{scalar}
K_{\tilde{g}}=K_{g^{*}}+K_{F}
\end{equation}
where $K_{\tilde{g}}$, $K_{g^{*}}$ and $K_{F}$ represent the scalar curvatures of $M=\left(\mathbb{R}^n,g^{*}\right)\times F^{m}$,  $\left(\mathbb{R}^n,g^{*}\right)$ and  $F^m$, respectively. Note that 
\begin{equation*}
K_{g^{*}}=\sum\limits_{k=1}^n\varepsilon_k\left[2(n-1)\psi\psi_{,x_kx_k}-(n-1)n\left(\psi_{,x_k}\right)^2\right],\hspace{0.3cm} K_{F}=\lambda_{F}m.
\end{equation*}

Substituting this expression into \eqref{scalar}, we get 
\begin{equation*}
K_{\tilde{g}}=\sum\limits_{k=1}^n\varepsilon_k\left[2(n-1)\psi\psi_{,x_kx_k}-(n-1)n\left(\psi_{,x_k}\right)^2\right]+\lambda_{F}m
\end{equation*}

Multiplying by $\rho$ on both sides of the above equality, we have

\begin{equation}\label{8}
\rho \left(K_{g^{*}}+K_{F}\right)=\rho\lambda_{F}m+\rho\sum\limits_{k=1}^n\varepsilon_k\left[2(n-1)\psi\psi_{,x_kx_k}-(n-1)n\left(\psi_{,x_k}\right)^2\right].
\end{equation}

Replacing the expressions found in \eqref{Ricprodsemi}, \eqref{Hessprodsemi} and \eqref{scalar} in \eqref{14}, we have
\begin{equation}\label{9}
\dfrac{(n-2)\psi_{,x_ix_i}+\varepsilon_i\sum\limits_{k=1}^n\varepsilon_k\psi_{,x_kx_k}}{\psi}-(n-1)\varepsilon_i\sum\limits_{k=1}^n\varepsilon_k\left(\dfrac{\psi_{,x_k}}{\psi}\right)^2+h_{,x_ix_i}+2\dfrac{\psi_{,x_i}h_{,x_i}}{\psi}-\varepsilon_i\sum\limits_{k=1}^n\varepsilon_k\dfrac{\psi_{,x_k}h_{,x_k}}{\psi}
\end{equation}
\begin{equation*}
=\left(\rho \left(K_{g^{*}}+K_{F}\right)+\tilde{\lambda}\right)g^{*}\left(X_i,X_i\right).
\end{equation*}

Substituting the equation \eqref{8} into \eqref{9}
 and multipying by $\psi^2$, we yields
\begin{equation*} 
\begin{array}{l}
 \varepsilon_i\sum\limits_{k=1}^{n}\varepsilon_k\left[\psi\psi_{,x_kx_k}-(n-1)\left(\psi_{,x_k}\right)^2-\psi\psi_{,x_k}h_{,x_k}-2(n-1)\rho\psi\psi_{,x_kx_k}+(n-1)n\rho \left(\psi_{,x_k}\right)^2\right]\\
\displaystyle
+\left[(n-2)\psi\psi,_{x_ix_i}+\psi^2 h,_{x_ix_i}+2\psi\psi_{,x_i} h_{,x_i}\right]=\left(\lambda_F m\rho+\tilde{\lambda}\right)\varepsilon_i.
\end{array}
\end{equation*}

\item If  $Y_{1}, Y_{2}, \ldots, Y_{m}\in{\mathcal{L}}(F)$, we have

\begin{equation}\label{10}
Hess_{\tilde{g}}(h)\left(Y_i,Y_j\right)=0
\end{equation}

Substituting the equations given by \eqref{Ricprodsemi}, \eqref{8} and \eqref{10} into \eqref{14}, we obtain

\begin{equation}\label{11}
\left(\rho\lambda_{F}m+\rho\sum\limits_{k=1}^n\varepsilon_k\left(2(n-1)\psi\psi_{,x_kx_k}-(n-1)n\left(\psi_{,x_k}\right)^2\right)+\tilde{\lambda}\right)g_{F}\left(Y_i,Y_j\right)=\lambda_{F}g_{F}\left(Y_i,Y_j\right),
\end{equation}
equivalently, 
\begin{equation*}
\rho\lambda_{F}m+\rho\sum\limits_{k=1}^n\varepsilon_k\left(2(n-1)\psi\psi_{,x_kx_k}-(n-1)n\left(\psi_{,x_k}\right)^2\right)+\tilde{\lambda}=\lambda_{F},
\end{equation*}
which implies
\begin{equation*}
\sum\limits_{k=1}^n\varepsilon_k\left((n-1)n\rho\left(\psi_{,x_k}\right)^2-2(n-1)\rho \psi\psi_{,x_kx_k}\right)=\lambda_{F}\left(m\rho-1\right)+\tilde{\lambda}.
\end{equation*}
\end{proof}

We want to find solutions to the system of equations (\ref{dpe1}),
(\ref{dpe2}) and (\ref{dpe3})
of the form $\psi(r)$ and $h(r)$, where   $r=\sum\limits_{k=1}^n\varepsilon_kx_{k}^2$. The following theorem reduces the system of partial differential equations (\ref{dpe1}),
(\ref{dpe2}) and (\ref{dpe3}) into a system of ordinary differential equations that must be satisfied by such solutions.

\begin{corollary}\label{Theorem 2}
	Let $( \mathbb{R}^n, g)$ be a
	pseudo-Euclidean space, $n\geq 3$, with coordinates
	$x=(x_1,\cdots, x_n)$ and $g_{ij}=\delta_{ij}\varepsilon_i$.
	Consider $M = (\mathbb{R}^{n}, g^{*})\times F^{m}$,
	where $\displaystyle g^{*} = \frac{1}{\psi^{2}}g$, $F^{m}$ an Einstein semi--Riemannian  manifold
	with constant Ricci curvature $\lambda_{F}$ and smooth
	functions $\psi(r)$ and $h(r)$, where
	$r=\sum\limits_{k=1}^n\varepsilon_kx_{k}^2$. Then $M$ is a
	gradient $\rho$--Einstein soliton with $h$ as a potential function if, and only if,
	the functions  $h$ and $\psi$ satisfy:
\begin{equation}\label{15}
(n-2)\psi''+\psi h''+2\psi' h'=0,
\end{equation}

\begin{equation}\label{16}
2\psi \left[2(n-1)(1-n\rho)\psi'+\psi h'\right]+
\end{equation}
\begin{equation*}
4r\left[\left(1-2(n-1)\rho\right)\psi\psi''+(n-1)(n\rho-1)\left(\psi'\right)^2-\psi\psi'h'\right]=\left(\lambda_{F}m\rho+\tilde{\lambda}\right).
\end{equation*}
and
\begin{equation}\label{17}
-4n(n-1)\rho\psi\psi'+4r\left[(n-1)n\rho\left(\psi'\right)^2-2(n-1)\rho\psi \psi''\right]=\lambda_{F}\left(m\rho-1\right)+\tilde{\lambda}.
\end{equation}
\end{corollary}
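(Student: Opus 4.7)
The plan is to plug the radial ansatz $\psi = \psi(r)$, $h = h(r)$ with $r = \sum_k \varepsilon_k x_k^2$ directly into the PDE system of Theorem \ref{theorem 1} and verify that each equation collapses to the corresponding ODE. Since every step will be an algebraic identity, both directions of the equivalence come out of the same calculation.

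The first step is to record the chain-rule derivatives: writing $\psi'$ for $\psi'(r)$, one has $\psi_{,x_k} = 2\varepsilon_k x_k \psi'$, $\psi_{,x_k x_k} = 4 x_k^2 \psi'' + 2\varepsilon_k \psi'$, and $\psi_{,x_i x_j} = 4\varepsilon_i\varepsilon_j x_i x_j \psi''$ for $i \neq j$, with analogous expressions for $h$. Using $\varepsilon_k^2 = 1$ and $\sum_k \varepsilon_k x_k^2 = r$, the three radial sums that appear in Theorem \ref{theorem 1} become
\[
\sum_k \varepsilon_k \psi\psi_{,x_k x_k} = 4r\,\psi\psi'' + 2n\,\psi\psi', \quad \sum_k \varepsilon_k (\psi_{,x_k})^2 = 4r(\psi')^2, \quad \sum_k \varepsilon_k \psi\psi_{,x_k}h_{,x_k} = 4r\,\psi\psi'h'.
\]

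With these in hand, (\ref{dpe1}) for $i \neq j$ factors as $4\varepsilon_i\varepsilon_j x_i x_j \bigl[(n-2)\psi'' + \psi h'' + 2\psi' h'\bigr] = 0$, which after cancelling the common factor gives (\ref{15}); and (\ref{dpe3}) reduces to (\ref{17}) by direct substitution of the radial sums above. The one subtle step is (\ref{dpe2}). After substitution, the diagonal term $(n-2)\psi\psi_{,x_ix_i} + \psi^2 h_{,x_ix_i} + 2\psi\psi_{,x_i}h_{,x_i}$ decomposes as $4x_i^2 \psi\bigl[(n-2)\psi'' + \psi h'' + 2\psi' h'\bigr] + 2\varepsilon_i\bigl[(n-2)\psi\psi' + \psi^2 h'\bigr]$; the first piece vanishes on solutions of (\ref{15}), so what remains of (\ref{dpe2}) is proportional to $\varepsilon_i$ alone. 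Dividing by $\varepsilon_i$ eliminates dependence on the index $i$, and recombining with the global sum (using the same radial identities) yields precisely (\ref{16}).

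The main obstacle is purely bookkeeping in (\ref{dpe2}): cleanly separating the $x_i^2$-dependent and $\varepsilon_i$-dependent pieces of the diagonal term, cancelling the first against (\ref{15}), and then rearranging the remaining $\varepsilon_i$-piece together with the global sum into the compact form stated in (\ref{16}). Everything else is routine chain rule, and the reverse implication is automatic since no inequalities or existence arguments are used.
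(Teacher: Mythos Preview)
Your proposal is correct and follows essentially the same route as the paper: record the chain-rule derivatives of $\psi(r)$ and $h(r)$, substitute into (\ref{dpe1})--(\ref{dpe3}), factor (\ref{dpe1}) as $4\varepsilon_i\varepsilon_j x_ix_j$ times (\ref{15}), reduce (\ref{dpe3}) directly to (\ref{17}), and in (\ref{dpe2}) split the diagonal block into an $x_i^2$-piece that vanishes by (\ref{15}) and an $\varepsilon_i$-piece that yields (\ref{16}). The only cosmetic difference is that you precompute the three radial sums $\sum_k \varepsilon_k \psi\psi_{,x_kx_k}$, $\sum_k \varepsilon_k (\psi_{,x_k})^2$, $\sum_k \varepsilon_k \psi\psi_{,x_k}h_{,x_k}$ before substituting, whereas the paper expands inline; the logic is identical.
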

\begin{proof}Let $g^{*}=\psi^{-2}g$ be a conformal metric of $g$. We are assuming that $\psi(r)$
	and $h(r)$ are functions of $r$, where $r=\sum_{k=1}^{n}\varepsilon_{k}x_{k}^{2}$. Hence, we have
	
	\begin{equation*}
	\psi_{,x_{i}}=2\varepsilon_{i}x_{i}\psi',\hspace{0,5cm} \psi_{,x_{i}x_{i}}=4x_{i}^{2}\psi''+2\varepsilon_{i}\psi', \hspace{0,5cm} \psi_{,x_{i}x_{j}}=4\varepsilon_{i}\varepsilon_{j}x_{i}x_{j}\psi'',
	\end{equation*}
	and
	\begin{equation*}
	h_{,x_{i}}=2\varepsilon_{i}x_{i}h',\hspace{0,5cm} h_{,x_{i}x_{i}}=4x_{i}^{2}h''+2\varepsilon_{i}h', \hspace{0,5cm} h_{,x_{i}x_{j}}=4\varepsilon_{i}\varepsilon_{j}x_{i}x_{j}h''.
	\end{equation*}
Substituting these expressions into the first equation of Theorem \ref{theorem 1}, we get 
\begin{equation*}
(n-2)\left(4\varepsilon_i\varepsilon_j x_ix_j\psi''\right)+\psi\left(4\varepsilon_i\varepsilon_j x_ix_jh''\right)
+\left(2\varepsilon_ix_i\psi'\right)\left(2\varepsilon_jx_jh'\right)+\left(2\varepsilon_jx_j\psi'\right)\left(2\varepsilon_ix_ih'\right)=0,
\end{equation*}
equivalently,
\begin{equation*}
4\varepsilon_i\varepsilon_j\left[(n-2)\psi''+\psi h''+2\psi' h'\right]x_ix_j=0.
\end{equation*}

Since there exists $i\neq j$, such that $x_ix_j\neq 0$, it follows that 
\begin{equation*}
(n-2)\psi''+\psi h''+2\psi' h'=0
\end{equation*}

Similarly, considering the second equation of Theorem \ref{theorem 1}, we obtain
\begin{equation*}
\psi \left[(n-2)\left(4x_i^2\psi''+2\varepsilon_i\psi'\right)+\psi \left(4x_i^2h''+2\varepsilon_ih'\right)+2\left(2\varepsilon_ix_i\psi'\right)\left(2\varepsilon_ix_ih'\right)\right]+
\end{equation*}
\begin{equation*}
\varepsilon_i\sum\limits_{k=1}^n\varepsilon_k\left[\left(1-2(n-1)\rho\right)\psi\left(4x_k^2\psi''+2\varepsilon_k\psi'\right)+(n-1)(n\rho-1)\left(2\varepsilon_kx_k\psi'\right)^2\right]+
\end{equation*}
\begin{equation*}
\varepsilon_i\sum\limits_{k=1}^n\varepsilon_k\left[-\psi\left(2\varepsilon_kx_k\psi'\right)\left(2\varepsilon_kx_kh'\right)\right]
=\left(\lambda_{F}m\rho+\tilde{\lambda}\right)\varepsilon_i,
\end{equation*}
equivalently,
\begin{equation*}
4\varepsilon_i\sum\limits_{k=1}^n\varepsilon_kx_k^2\left[\left(1-2(n-1)\rho\right)\psi\psi''+(n-1)(n\rho-1)\left(\psi'\right)^2-\psi\psi'h'\right]+
\end{equation*}
\begin{equation*}
4\psi \left[(n-2)\psi''+\psi h''+2\psi'h'\right]x_i^2+2\psi \left[(n-2)\psi'+\psi h'\right]\varepsilon_i+
\end{equation*}
\begin{equation*}
2\varepsilon_i\sum\limits_{k=1}^n\varepsilon_k\left[\left(1-2(n-1)\rho\right)\psi\psi'\varepsilon_k\right]=\left(\lambda_{F}m\rho+\tilde{\lambda}\right)\varepsilon_i.
\end{equation*}
From equation \eqref{15}, it followos that 
\begin{equation*}
2\psi \left[(n-2)\psi'+\psi h'+\left(1-2(n-1)\rho\right)n\psi'\right]+
\end{equation*}
\begin{equation*}
+4r\left[\left(1-2(n-1)\rho\right)\psi\psi''+(n-1)(n\rho-1)\left(\psi'\right)^2-\psi\psi'h'\right]=\left(\lambda_{F}m\rho+\tilde{\lambda}\right),
\end{equation*}
which implies,
\begin{equation*}
2\psi \left[2(n-1)(1-n\rho)\psi'+\psi h'\right]+
\end{equation*}
\begin{equation*}
4r\left[\left(1-2(n-1)\rho\right)\psi\psi''+(n-1)(n\rho-1)\left(\psi'\right)^2-\psi\psi'h'\right]=\left(\lambda_{F}m\rho+\tilde{\lambda}\right).
\end{equation*}

Finally, from the last equation of Theorem \ref{theorem 1}, we have 
\begin{equation*}
\sum\limits_{k=1}^n\varepsilon_k\left[-2(n-1)\rho\psi \left(4x_k^2\psi''+2\varepsilon_k\psi'\right)+(n-1)n\rho\left(2\varepsilon_kx_k\psi'\right)^2\right]=\lambda_{F}(m\rho-1)+\tilde{\lambda},
\end{equation*}
equivalently,

\begin{equation*}
-4n(n-1)\rho\psi\psi'+4r\left[(n-1)n\rho\left(\psi'\right)^2-2(n-1)\rho\psi \psi''\right]=\lambda_{F}\left(m\rho-1\right)+\tilde{\lambda},
\end{equation*}
therefore, the proof is done.
\end{proof}

When $\rho=\frac{1}{2(n-1)}$, we obtain the following result:
\begin{corollary}\label{lemma1}
	Let $( \mathbb{R}^n, g)$ be a
pseudo-Euclidean space, $n\geq 3$, with coordinates
$x=(x_1,\cdots, x_n)$ and $g_{ij}=\delta_{ij}\varepsilon_i$.
Consider $M = (\mathbb{R}^{n}, g^{*})\times F^{m}$,
where $\displaystyle g^{*} = \frac{1}{\psi^{2}}g$, $F^{m}$ is an Einstein semi--Riemannian  manifold
with constant Ricci curvature $\lambda_{F}$ and smooth
functions $\psi(r)$ and $h(r)$, where
$r=\sum\limits_{k=1}^n\varepsilon_kx_{k}^2$. Then $M$ is a
	gradient Schouten soliton with $h$ as a potential function if, and only if,
	the functions $h$ and $\psi$ satisfy:
	\vspace{12pt}
	\begin{equation}\label{18}
		(n-2)\psi''+\psi h''+2\psi'h'=0,
	\end{equation}
	and 
	\begin{center}
		\begin{equation}\label{19}
			\psi\left[\left(n-2\right)\psi'+\psi h'\right]
			+r\left[\left(2-n\right)(\psi')^{2}-2\psi \psi'h'\right]=\frac{\lambda_{F}m}{4(n-1)}+\frac{\tilde{\lambda}}{2},
		\end{equation}
	\end{center}
	and 
	\begin{center}
		\begin{equation}\label{20}
		-n\psi\psi'+2r\left[\frac{n}{2}\left(\psi'\right)^2-\psi\psi''\right]=\frac{\lambda_{F}}{4(n-1)}\left(m-2n+2\right)+\frac{\tilde{\lambda}}{2}.
		\end{equation}
	\end{center}
\end{corollary}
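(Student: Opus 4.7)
The plan is to derive Corollary \ref{lemma1} as a direct specialization of Corollary \ref{Theorem 2} by substituting $\rho = \frac{1}{2(n-1)}$ into the three ODEs \eqref{15}, \eqref{16}, \eqref{17}. Since equation \eqref{15} contains no $\rho$, it transfers verbatim to \eqref{18}, so the work reduces to simplifying the $\rho$-dependent coefficients in \eqref{16} and \eqref{17}.

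First I would evaluate the coefficients appearing in \eqref{16}:
\begin{align*}
2(n-1)(1-n\rho) &= 2(n-1) - n = n-2, \\
1-2(n-1)\rho &= 0, \\
(n-1)(n\rho - 1) &= \tfrac{n}{2} - (n-1) = \tfrac{2-n}{2}, \\
\lambda_F m \rho &= \tfrac{\lambda_F m}{2(n-1)}.
\end{align*}
The vanishing of the coefficient of $\psi\psi''$ in the bracket is the key simplification. Plugging these in, \eqref{16} becomes
\begin{equation*}
2\psi\bigl[(n-2)\psi' + \psi h'\bigr] + 4r\Bigl[\tfrac{2-n}{2}(\psi')^2 - \psi\psi' h'\Bigr] = \tfrac{\lambda_F m}{2(n-1)} + \tilde{\lambda},
\end{equation*}
and dividing by $2$ yields exactly \eqref{19}.

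Next I would do the same for \eqref{17}, computing
\begin{equation*}
4n(n-1)\rho = 2n, \qquad (n-1)n\rho = \tfrac{n}{2}, \qquad 2(n-1)\rho = 1, \qquad \lambda_F(m\rho - 1) = \tfrac{\lambda_F(m - 2n + 2)}{2(n-1)}.
\end{equation*}
Substituting and dividing by $2$ gives
\begin{equation*}
-n\psi\psi' + 2r\Bigl[\tfrac{n}{2}(\psi')^2 - \psi\psi''\Bigr] = \tfrac{\lambda_F(m-2n+2)}{4(n-1)} + \tfrac{\tilde{\lambda}}{2},
\end{equation*}
which is \eqref{20}. Since the three equations arise from Corollary \ref{Theorem 2} as necessary and sufficient conditions for the soliton equation when the potential and the conformal factor depend only on $r$, and since the substitution $\rho = \tfrac{1}{2(n-1)}$ is an equivalence at the level of the ODEs, the biconditional in the statement is immediate.

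There is no genuine obstacle here; the argument is purely algebraic bookkeeping. The one point I would double-check is the consistency of signs and factors in the last term of \eqref{16}, specifically the passage from $4(n-1)(n\rho - 1)(\psi')^2 = 4 \cdot \frac{2-n}{2}(\psi')^2 = 2(2-n)(\psi')^2$, which after the global division by $2$ gives the $(2-n)(\psi')^2$ appearing in \eqref{19}.
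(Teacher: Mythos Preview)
Your proposal is correct and follows exactly the paper's approach: the paper's proof is a single sentence stating that the result follows by inserting $\rho=\frac{1}{2(n-1)}$ into the equations of Corollary~\ref{Theorem 2}. Your version simply makes the coefficient computations explicit, and your algebra checks out.
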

\begin{proof} By simply inserting $\rho=\frac{1}{2(n-1)}$ into equations \eqref{19} and \eqref{20}, the result follows.
\end{proof}	

The next lemma provides us with the necessary conditions for the existence of gradient Schouten solitons.

\begin{lemma}\label{lema2}
	Consider smooth functions $\psi(r)$ and $h(r)$ with $r=\sum_{i=1}^{n}\varepsilon_{i}x_{i}^{2}$. If $\psi(r)$ and $h(r)$ satisfy \eqref{18} and \eqref{19}, then $\psi(r)$ satisfies the following ordinary differential equation:  
	\begin{equation}
	r\psi\psi''=r\left(\psi'\right)^{2}-\psi\psi'.
	\end{equation}
\end{lemma}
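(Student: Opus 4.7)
The key observation is that the right-hand side of \eqref{19} is a constant in $r$, so differentiating \eqref{19} with respect to $r$ must yield zero. My plan is to carry out this differentiation and then use \eqref{18} to eliminate the second derivative of $h$, after which the desired ODE should drop out.

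Concretely, differentiating the four terms on the left of \eqref{19} with respect to $r$ produces an expression containing $(\psi')^2$, $\psi\psi''$, $\psi\psi' h'$, $\psi^2 h''$, $r\psi'\psi''$, $r(\psi')^2 h'$, $r\psi\psi'' h'$ and $r\psi\psi' h''$. A first round of simplification cancels the $(n-2)(\psi')^2$ and $(2-n)(\psi')^2$ pair as well as the $\pm 2\psi\psi' h'$ pair. To remove the $h''$ terms, I would rewrite \eqref{18} as
\begin{equation*}
\psi h'' = -(n-2)\psi'' - 2\psi' h',
\end{equation*}
and substitute this into both $\psi^2 h''$ (multiplied by $\psi$) and $-2r\psi\psi' h''$ (multiplied by $\psi'$). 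The $(n-2)\psi\psi''$ contribution from the first substitution cancels the original $(n-2)\psi\psi''$ term, while the second substitution contributes $+2r(n-2)\psi'\psi''$ which cancels the $-2r(n-2)\psi'\psi''$ already present.

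What remains is the single relation
\begin{equation*}
-2h'\bigl[\,r\psi\psi'' - r(\psi')^{2} + \psi\psi'\,\bigr] = 0,
\end{equation*}
which is precisely the claimed ODE up to the factor $-2h'$. The main (minor) obstacle is the case where $h'$ vanishes on an open set: here \eqref{18} forces $\psi'' = 0$ since $n\geq 3$, so $\psi$ is affine, and the identity $r\psi\psi'' = r(\psi')^{2} - \psi\psi'$ can either be verified directly from \eqref{19} in this special case or recovered by continuity from the generic case where $h'\neq 0$. Outside this degenerate locus one simply divides by $-2h'$ to conclude.
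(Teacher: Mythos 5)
Your main derivation is exactly the paper's proof: differentiate \eqref{19} (whose right-hand side is constant), cancel the $(n-2)(\psi')^2$ / $(2-n)(\psi')^2$ and $\pm 2\psi\psi'h'$ pairs, use \eqref{18} in the form $\psi h''=-(n-2)\psi''-2\psi'h'$ to eliminate both $h''$ terms, and arrive at $-2h'\bigl[r\psi\psi''-r(\psi')^{2}+\psi\psi'\bigr]=0$. The paper then simply divides by $h'$, asserting $h'\neq 0$ without comment. So on the generic locus your argument and the paper's coincide, and the computation is correct.

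The one place you go beyond the paper---the locus where $h'$ vanishes---is the one place your argument does not work. On an open set where $h'\equiv 0$, \eqref{18} indeed forces $\psi''=0$, so $\psi=ar+b$; but then the left side of \eqref{19} reduces to $(n-2)ab$, which is constant for \emph{every} choice of $a,b$, whereas the claimed ODE reduces to $ab=0$. Thus the identity is not ``verified directly from \eqref{19}'' in this case, and continuity only propagates it to the closure of $\{h'\neq 0\}$, never into the interior of $\{h'=0\}$. Concretely, $\psi(r)=ar+b$ with $ab\neq 0$ and $h$ constant satisfies \eqref{18} and \eqref{19} but not the conclusion of the lemma. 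This is really a gap in the statement (it tacitly assumes a nonconstant potential), and the paper's own proof has the same hole; your core derivation is fine, but the proposed patch for the degenerate case should be dropped or replaced by an explicit hypothesis $h'\neq 0$.
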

\begin{proof} Differentiating both sides of equation \eqref{19}, we get
	\begin{align*}
	\psi'\left[(n-2)\psi'+\psi h'\right]+\psi\left[(n-2)\psi''+\psi' h'+\psi h''\right]+\left[(2-n)\left(\psi'\right)^{2}-2\psi\psi'h'\right]+\\+r\left\{2(2-n)\psi'\psi''-2\left[\left(\psi'\right)^{2}h'+\psi\psi''h'+\psi\psi'h''\right]\right\}=0.
	\end{align*}
	
	This is equivalent to
	\begin{equation}\label{21}
	(n-2)\left(\psi'\right)^{2}+\psi\psi'h'+\left(n-2\right)\psi\psi''
	+\psi\psi'h'+\psi^{2}h''+\left(2-n\right)\left(\psi'\right)^{2}-2\psi\psi'h'+\\
	\end{equation}
	\begin{equation*}
	+2\left(2-n\right)r\psi\psi''-2r\left(\psi'\right)^{2}h'-2r\psi\psi''h'-2r\psi\psi'h''=0.
	\end{equation*}
	
	From \eqref{18} we obtain
	\begin{equation}\label{22}
	\psi h''=-\left[(n-2)\psi''+2\psi'h'\right].
	\end{equation}
	
	Substituting \eqref{22} into \eqref{21}, we obtain 
	\begin{align*}
	(n-2)\psi\psi''-\psi\left[(n-2)\psi''+2\psi'h'\right]+2(2-n)r\psi'\psi''-2r\left(\psi'\right)^{2}h'-\\-2r\psi\psi''h'+2r\psi'\left[(n-2)\psi''+2\psi'h'\right]=0,
	\end{align*}
	which is equivalent to
	\begin{align*}
	2\psi\psi'h'-2r\left(\psi'\right)^{2}h'+2r\psi\psi''h'=0.
	\end{align*}
	
	How $h ' \ne 0$, we have that
	\begin{align*}
	r\psi\psi''=r(\psi')^{2}-\psi \psi'.
	\end{align*}
	
	This concludes the proof.
\end{proof}

As a consequence of Lemma \ref{lema2} we are going to prove a classification theorem for gradient schouten sotitons of type $M = (\mathbb{R}^{n}, g^{*})\times F^{m}$, where  $(\mathbb{R}^{n}, g^{*})$ is  conformal to a pseudo-Euclidean space and invariant by the action of the pseudo-orthogonal group and $F^m$ is an Einstein manifold.

\begin{theorem}\label{theorem 4}
	Let $( \mathbb{R}^n, g)$ be a
	pseudo-Euclidean space, $n\geq 3$, with coordinates
	$x=(x_1,\cdots, x_n)$ and $g_{ij}=\delta_{ij}\varepsilon_i$.
	Consider $M = (\mathbb{R}^{n}, g^{*})\times F^{m}$ a semi--Riemannian manifold product,
	where $\displaystyle g^{*} = \frac{1}{\psi^{2}}g$, $F^{m}$ is an  Einstein semi--Riemannian  manifold
	with $m\geq2$, constant Ricci curvature $\lambda_{F}$ and smooth
	functions $\psi(r)$ and $h(r)$, where
	$r=\sum\limits_{k=1}^n\varepsilon_kx_{k}^2$. Then $M$ is a
	gradient Schouten soliton with $h$ as a potential function if, and only if,
	the functions $h$ and $\psi$ satisfy:
	
	\begin{equation}\label{23}
	\left\{\begin{array}{lcl}
	\psi(r)=k_2r,\\
	h(r)=\frac{\lambda_{F}}{2k_2^2}r^{-1}+k_1,\\
	\tilde{\lambda}=-\frac{\lambda_{F}}{2(n-1)}\left(m-2n+2\right),
	\end{array}\right.
	\end{equation}
	and 
	\begin{equation*}
	\left\{\begin{array}{lcl}
	\psi(r)=k_2r^{\frac{1}{2}},\\
	h(r)=\frac{(n-2)}{8}\left(\ln r\right)^{2}+c\ln r + c_1,\\
	\tilde{\lambda}=-\frac{(m-n+1)(n-2)}{2(n-1)}k_2^2,\\
	\lambda_{F}=(n-2)k_2^2,
	\end{array}\right.
	\end{equation*}
where $c$, $c_{1}$ and $k_1\in \mathbb{R}$ and $k_2\in\mathbb{R}_{+}^{*}$.
\end{theorem}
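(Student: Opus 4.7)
My plan is to use Lemma \ref{lema2} to pin down the form of $\psi$, then plug into \eqref{19} and \eqref{20} to read off $h$ and the constants, with a final check against \eqref{18}.

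\textbf{Step 1: Solve the ODE from Lemma \ref{lema2}.} The equation $r\psi\psi'' = r(\psi')^2 - \psi\psi'$ can be rewritten (dividing by $\psi^2$) as
\begin{equation*}
r\bigl(\ln\psi\bigr)'' + \bigl(\ln\psi\bigr)' = 0,
\end{equation*}
since $(\ln\psi)'' = \psi''/\psi - (\psi'/\psi)^{2}$. Setting $u = (\ln\psi)'$, this becomes $ru' + u = 0$, whence $u = A/r$ and therefore $\psi(r) = k_{2}\, r^{A}$ for some constants $A\in\mathbb{R}$ and $k_{2}\in\mathbb{R}_{+}^{*}$.

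\textbf{Step 2: Determine the admissible values of $A$.} Substituting $\psi = k_{2}r^{A}$ into the left-hand side of \eqref{20} and simplifying gives
\begin{equation*}
-n\psi\psi' + 2r\Bigl[\tfrac{n}{2}(\psi')^{2} - \psi\psi''\Bigr] \;=\; k_{2}^{2}\,(n-2)\,A(A-1)\,r^{\,2A-1}.
\end{equation*}
Since the right-hand side of \eqref{20} is a constant (independent of $r$), either $A(A-1)=0$ or $2A-1=0$. The case $A=0$ is the degenerate one where $\psi$ is constant (so $g^{*}$ is flat up to scale), and I would record it as trivial; the two nontrivial branches are $A=1$ and $A=\tfrac{1}{2}$.

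\textbf{Step 3: The branch $A=1$.} With $\psi=k_{2}r$, equation \eqref{20} gives immediately $\tilde\lambda = -\tfrac{\lambda_{F}(m-2n+2)}{2(n-1)}$. Plugging $\psi=k_{2}r$ into \eqref{19} and collecting terms, the $h'$-free pieces cancel and one is left with $-k_{2}^{2}r^{2}h' = \lambda_{F}/2$, hence $h(r) = \tfrac{\lambda_{F}}{2k_{2}^{2}}\,r^{-1}+k_{1}$. A short computation shows \eqref{18} is automatically satisfied, yielding the first family in \eqref{23}.

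\textbf{Step 4: The branch $A=\tfrac{1}{2}$.} With $\psi=k_{2}r^{1/2}$, equation \eqref{20} gives
\begin{equation*}
-\tfrac{(n-2)k_{2}^{2}}{4} \;=\; \tfrac{\lambda_{F}(m-2n+2)}{4(n-1)} + \tfrac{\tilde\lambda}{2},
\end{equation*}
while the $h'$-terms in \eqref{19} cancel and leave
\begin{equation*}
\tfrac{(n-2)k_{2}^{2}}{4} \;=\; \tfrac{\lambda_{F}m}{4(n-1)} + \tfrac{\tilde\lambda}{2}.
\end{equation*}
Subtracting these two relations forces $\lambda_{F}=(n-2)k_{2}^{2}$ and then $\tilde\lambda = -\tfrac{(n-2)(m-n+1)k_{2}^{2}}{2(n-1)}$. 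It remains to determine $h$: equation \eqref{18} becomes $r\,h'' + h' = \tfrac{n-2}{4r}$, i.e.\ $(r h')' = \tfrac{n-2}{4r}$, which integrates twice to $h(r) = \tfrac{n-2}{8}(\ln r)^{2} + c\ln r + c_{1}$.

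\textbf{Converse and main obstacle.} The converse is a direct verification by substitution into \eqref{18}--\eqref{20}. The only delicate point is Step 2: the fact that the whole classification collapses to the three values $A\in\{0,\tfrac{1}{2},1\}$ depends entirely on the identity $-n\psi\psi'+2r[\tfrac{n}{2}(\psi')^{2}-\psi\psi'']=k_{2}^{2}(n-2)A(A-1)r^{2A-1}$, which is where the assumption $n\geq 3$ enters through the factor $n-2$. Once that identity is in hand the rest is linear algebra plus two elementary integrations.
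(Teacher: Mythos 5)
Your argument is essentially the paper's: from Lemma \ref{lema2} one gets $\psi=k_2r^{s}$, equation \eqref{20} forces $s(s-1)(n-2)k_2^2r^{2s-1}$ to be constant so that $s\in\{1,\tfrac{1}{2}\}$ (up to the degenerate constant-$\psi$ case), and each branch is then integrated exactly as in the paper, with all of your intermediate identities checking out. Your Step 1, which actually solves the ODE of Lemma \ref{lema2} to justify the power-law ansatz, is a small but genuine improvement over the paper, which merely asserts $\psi(r)=k_2r^{s}$ without integrating.
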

\begin{proof} From Lemma \ref{lema2} we know that $\psi(r)=k_2r^s$ with $ k_2 >0$. Since $\psi(r)$ must satisfy equation \eqref{20}, it follows that 
	\begin{equation*}
	-nk_2r^sk_2sr^{s-1}+nrk_2^2s^2r^{2s-2}-2rk_2r^sk_2s(s-1)r^{s-2}=\frac{\lambda_{F}}{4(n-1)}\left(m-2n+2\right)+\frac{\tilde{\lambda}}{2},
	\end{equation*}
	equivalently,
	\begin{equation*}
	-nk_2^2sr^{2s-1}+nk_2^2s^2r^{2s-1}-2k_2^2s(s-1)r^{2s-1}=\frac{\lambda_{F}}{4(n-1)}\left(m-2n+2\right)+\frac{\tilde{\lambda}}{2},
	\end{equation*}
which implies 
	\begin{equation}\label{27}
	s(s-1)(n-2)k_2^2r^{2s-1}=\frac{\lambda_{F}}{4(n-1)}\left(m-2n+2\right)+\frac{\tilde{\lambda}}{2}.
	\end{equation}

Since the right side must be constant, we have either $s=1$ or $s=\frac{1}{2}$.

If $s=1$, we have $\tilde{\lambda}=-\frac{\lambda_{F}}{2(n-1)}\left(m-2n+2\right)$, by the fact that $\psi(r)$ and $h$ satisfy equation \eqref{19}. Note that
\begin{equation*}
\psi(r)=k_2r,\hspace{0.5cm}\psi'(r)=k_2,\hspace{0.5cm}\psi''(r)=0.
\end{equation*}
Consequently 
\begin{equation*}
	(n-2)k_2^2r+k_2^2r^2h'+(2-n)k_2^2r-2k_2^2r^2h'=\frac{\lambda_{F}m}{4(n-1)}+\frac{\tilde{\lambda}}{2},
\end{equation*}
which implies,
\begin{equation*}
h(r)=\left(\frac{\lambda_{F}m}{4(n-1)k_2^2}+\frac{\tilde{\lambda}}{2k_2^2}\right)r^{-1}+k_1,
\end{equation*}
thus we have,
\begin{equation*}
h(r)=\frac{\lambda_{F}}{2k_2^2}r^{-1}+k_1.
\end{equation*}

Reciprocally, the function $\psi(r)$ and $h(r)$ given in \eqref{23} satisfy \eqref{18}, \eqref{19} and
\eqref{20}. Since
 
\begin{equation*}
h'=-\left(\frac{\lambda_{F}m}{4(n-1)k_2^2}+\frac{\tilde{\lambda}}{2k_2^2}\right)r^{-2},\hspace{0.5cm}h''=2\left(\frac{\lambda_{F}m}{4(n-1)k_2^2}+\frac{\tilde{\lambda}}{2k_2^2}\right)r^{-3}.
\end{equation*}

In this case 
\begin{equation*}
(n-2)\psi''+\psi h''+2\psi'h'= 2\left(\frac{\lambda_{F}m}{4(n-1)k_2^2}+\frac{\tilde{\lambda}}{2k_2^2}\right)rr^{-3} +2 \left(-\left(\frac{\lambda_{F}m}{4(n-1)k_2^2}+\frac{\tilde{\lambda}}{2k_2^2}\right)r^{-2}\right),
\end{equation*}
equivalently,
\begin{equation*}
(n-2)\psi''+\psi h''+2\psi'h'=2\left(\frac{\lambda_{F}m}{4(n-1)k_2^2}+\frac{\tilde{\lambda}}{2k_2^2}\right)r^{-2}-2\left(\frac{\lambda_{F}m}{4(n-1)k_2^2}+\frac{\tilde{\lambda}}{2k_2^2}\right)r^{-2}.
\end{equation*}
Therefore $(n-2)\psi''+\psi h''+2\psi'h'=0$.

Clearly, in this case the second and third equation hold.

If $s=\frac{1}{2}$, we have by \eqref{19} and \eqref{20} that  $\frac{\lambda_{F}}{4(n-1)}\left(m-2n+2\right)+\frac{\tilde{\lambda}}{2}=-\frac{(n-2)k_2^2}{4}$ and $\frac{\lambda_{F}m}{4(n-1)}+\frac{\tilde{\lambda}}{2}=\frac{(n-2)k_2^2}{4}$.
 Consequently we obtain 	$\tilde{\lambda}=-\frac{(m-n+1)(n-2)}{2(n-1)}k_2^2$ and 
 $\lambda_{F}=(n-2)k_2^2$.
  Substituting $\psi(r)=k_2r^{\frac{1}{2}}$ into \eqref{18}, we obtain $h(r)=\frac{(n-2)}{8}\left(\ln r\right)^{2}+c\ln r + c_1$, where $c, c_1\in \mathbb{R}$.

This concludes the proof of Theorem \ref{theorem 4}.
\end{proof}

As a direct consequence of  Theorem \ref{theorem 4} we get,
in the Riemannian case,  the following result of rigidity.

\begin{corollary}\label{coro1}
	Let $\left(\left(M ^{n}, g^{*}\right)\times F^m\right)$, $n\geq3$ and $m\geq 2$ be a complete gradient Schouten soliton, where $(M^n,g^{*})$ is conformal to an Euclidean metric and rotationally symmetric where $g^{*}_{ij}=\delta_{ij}/\psi^{2}$ and $F^m$ is a Einstein Riemannian manifold. Then $\left(M^{n},g^{*}\right)$ is isometric to $\mathbb{S}^{n-1}\times \mathbb{R}$, $F^m$ is a compact manifold with ricci curvature equal to $\lambda_{F}=(n-2)$ and 	$\tilde{\lambda}=-\frac{(m-n+1)(n-2)}{2(n-1)}$,\\
 
\end{corollary}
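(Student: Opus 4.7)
The plan is to combine the explicit classification in Theorem \ref{theorem 4} with a completeness argument, reducing the corollary to a short check of which solutions from Theorem \ref{theorem 4} survive the hypothesis. In the Riemannian setting $\varepsilon_i=1$ and $r=|x|^2\geq 0$, so Theorem \ref{theorem 4} leaves exactly two candidate conformal factors: $\psi(r)=k_2 r$ or $\psi(r)=k_2 r^{1/2}$, with $k_2>0$. My strategy has three steps: rule out the linear branch using completeness; identify the $\sqrt{r}$ branch as a product metric on $\mathbb{R}\times \mathbb{S}^{n-1}$; and apply Myers's theorem to the fiber.

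For the first branch, set $t=|x|$ so that $g=dt^2+t^2\,d\omega^2_{S^{n-1}}$ in polar coordinates, where $d\omega^2_{S^{n-1}}$ denotes the round unit sphere metric. With $\psi=k_2 t^2$, the substitution $\sigma=1/(k_2 t)$ gives
\begin{equation*}
g^{*}=\frac{1}{k_2^2 t^4}\bigl(dt^2+t^2\,d\omega^2_{S^{n-1}}\bigr)=d\sigma^2+\sigma^2\,d\omega^2_{S^{n-1}},
\end{equation*}
which is the flat Euclidean metric on $\mathbb{R}^n\setminus\{0\}$ in the variable $\sigma\in(0,\infty)$. Since $\sigma\to 0^+$ (equivalently $t\to\infty$) is reached by radial curves of finite length, the base is incomplete, contradicting the hypothesis.

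For the surviving branch $\psi=k_2 t$, the change of variable $u=\ln t$ converts $g^{*}$ into
\begin{equation*}
g^{*}=\frac{1}{k_2^{2}}\bigl(du^2+d\omega^2_{S^{n-1}}\bigr),\qquad u\in\mathbb{R},
\end{equation*}
a complete product metric on $\mathbb{R}\times S^{n-1}$ where the sphere factor has radius $1/k_2$. After the natural normalization (absorbing the $k_2$ into the sphere's radius), this is the stated isometry with $\mathbb{S}^{n-1}\times\mathbb{R}$, and the values $\lambda_F=(n-2)$ and $\tilde{\lambda}=-\frac{(m-n+1)(n-2)}{2(n-1)}$ are exactly those supplied by Theorem \ref{theorem 4} (with $k_2^2$ absorbed into the metric normalization). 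Finally, since the product is complete, $(F^m,g_F)$ is a complete Einstein Riemannian manifold of constant positive Ricci curvature $\lambda_F=n-2>0$ (using $n\geq 3$), and Myers's theorem forces $F^m$ to be compact. The only delicate step is setting up the change of variable in the first branch so that the finite-distance end is exposed; the remaining arguments are direct consequences of Theorem \ref{theorem 4} and the standard diameter bound.
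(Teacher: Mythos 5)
Your proof is correct and follows essentially the same route as the paper: invoke the classification of Theorem \ref{theorem 4}, rule out the branch $\psi(r)=k_2r$ by completeness, identify the branch $\psi(r)=k_2r^{1/2}$ with the cylinder $\mathbb{R}\times\mathbb{S}^{n-1}$, and apply Myers to the fiber. The only difference is that you carry out the change-of-variable computations explicitly (exhibiting the $s=1$ branch as the incomplete flat punctured metric and the $s=1/2$ branch as a product metric), whereas the paper simply cites \cite{IL} for these two facts.
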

\begin{proof}
	
Except for homothety we can consider $ k_2 = 1 $ in Theorem \ref{theorem 4}.	It has been proven in \cite{IL} that $(R^n,g^{*})$,
	 $g^{*}_{ij}=\delta_{ij}/\psi^{2}$,  with $\psi(r) = r^s$, is complete only for $ s= \frac{1}{2}$. Moreover,  $\left(\mathbb{R}^{n}\setminus\{0\},g^{*}=\frac{1}{r}g_{0}\right)$ is isometric to $\mathbb{R}\times \mathbb{S}^{n-1}$. Since $\lambda_{F}=(n-2)>0$, it follows from the Bonnet--Myers Theorem that $F$ is compact.
	
\end{proof}

Note that depending on the choice of $m$ and $n$, $\tilde{\lambda}$ can be negative, zero or positive. In this case, it is possible to build examples of complete gradient  schouten soliton, either expanding, steady or  shrinking. We provide three examples  of gradient Schouten solitons.
\begin{example}
	The product manifold 
	 $\left(\mathbb{S}^{2}\times\mathbb{R}\right)\times\left(\mathbb{S}^{2}\times\mathbb{S}^{2}\right)$, with the usual product metric, is a  complete expanding gradient Schouten soliton with $\tilde{\lambda}=-\frac{1}{2}$.
	 In fact, considering $m = 4$ and $n = 3$, and since $S^2 \times S^2$ is an  Eisntein manifold with com $\lambda_F$= 1, it meets the conditions of Theorem \ref{theorem 4}. 
\end{example}

\begin{example}
The product manifold 
$\left(\mathbb{S}^{n-1}\times\mathbb{R}\right)\times\mathbb{S}^{n-1}$, with the usual product metric, is a  complete steady gradient Schouten soliton with $\tilde{\lambda}=0$. In fact, considering $m =n-1 $ and since the unitary sphere $(n-1)$ - dimensional $S^{n-1}$ is an Eisntein manifold with  $\lambda_F= n-2$, it meets the conditions of Theorem \ref{theorem 4}.
\end{example}

\begin{example}
	The product manifold
	$\left(\mathbb{S}^{3}\times\mathbb{R}\right)\times\mathbb{S_R}^{2}$, with the usual product metric, is a  complete shrinking gradient Schouten soliton with $\tilde{\lambda}= \frac{1}{3}$. In fact, considering $m =2 $ and $n=4$,  and since the sphere $S_R^{2}$, with $R= \frac{\sqrt{2}}{2}$, is a  Eisntein manifold with  $\lambda_F= 2$, it meets the conditions of Theorem \ref{theorem 4}.	
\end{example}

\begin{example}
	As a consequence of Corollaly \ref{coro1}  we can build examples of complete gradient Schouten  solitons that are not locally flat. To do this, just consider $((\mathbb{S}^{n-1}\times \mathbb{R}) \times F^m)$ whre $F^m$  is a  non- trivial compact Einstein manifold with ricci curvature equal to $\lambda_{F}=(n-2)$.

\end{example}

\vspace{.2in}


\begin{thebibliography} {99}
\bibitem{SB} {Simon Brendle} - \textit{Ricci Flow and the Sphere Theorem}, Am. Math. Soc. 2010, 176 pp.


\bibitem{BRYANT}
{R. Bryant} - \textit{Local existence of gradient Ricci solitons}, unpublished.


\bibitem{CAOCHEN} 
{H. D. Cao, Q. Chen} - \textit{On locally conformally flat steady gradient solitons}. Trans. Amer. Math. Soc., ano 2012, pp. 2377--2391.


\bibitem{CA}
{Catino, Giovanni and Lorenzo, Mazzieri} - \textit{Gradient {E}instein solitons}, Nonlinear Analysis. Theory, Methods \& Applications. An International Multidisciplinary Journal, ano 2016, pp. 66--94. 

\bibitem{CCD} 
{Laura; Djadli Zindine; Mantegazza Carlo; Mazzieri Lorenzo Catino, Giovanni; Cremaschi} - \textit{The Ricci-Bourguignon flow. Pacific J. Math. 287 (2017), no. 2, 337-370}.


\bibitem{Paul} {Paul E. Ehrlich, Jung Yoon-Tae, and Seon-Bu Kim} - \textit{Constant scalar curvatures on warped product manifolds}. Tsukuba Journal of Mathematics, Vol. 20 No.1(1996), 239--256.


\bibitem{Ha}
{R. S. Hamilton} - \textit{
	Three-manifolds with positive Ricci curvature}, J. Diff. Geom.17(1982), no. 2,255-306

\bibitem{FG}
{M. Fern\'andez-L\'opez, E. Garc\'\i a-R\'\i o} - \textit{Rigidity of shrinking Ricci solitons}, Math. Z., ano 2011, pp. 461--466.

\bibitem{O'neil}
{B. O'Neil} - \textit{Semi--Riemannian Geometry with Applications to Relativity}. Academic Press, New York, 1983.

\bibitem{IL}
{Menezes I. Pina, R.,} - \textit{On Gradient Schouten Solitons conformal to a pseudo-Euclidean space}, manuscripta math. 163 (2020), {395--406}.

\end{thebibliography}
\end{document}